\newcommand\N{\mathbb{N}}
\newcommand\R{\mathbb{R}}
\newcommand{\forget}[1]{}
\def\Om{{\Omega}}  
\def\om2{{\Om\times\Om}}
\def\M{{\mathcal M}}
\def\supp{\mathrm{supp}\,}
\def\div{\mathrm{div}\,}
\def\Lip{\mathrm{Lip}}
\newtheorem{theorem}{Theorem}[section]
\newtheorem{lemma}[theorem]{Lemma}
\newtheorem{proposition}[theorem]{Proposition}
\theoremstyle{remark}
\newtheorem{remark}[theorem]{Remark}
\numberwithin{equation}{section}
\begin{document}
\title{Constructive controllability for incompressible vector fields}
\author{Sergey Kryzhevich}
\address[Sergey Kryzhevich]{
Institute of Applied Mathematics, Faculty of Applied Physics and Mathematics, Gdańsk University of Technology, 80-233 Gdańsk, Poland
\and
BioTechMed Center, Gdańsk University of Technology 
\and 
Faculty of Mathematics and Computer Science, St. Petersburg State University, 13B Universitetskaya Emb.,
St.\ Petersburg 199034, Russia
}
\email[Sergey Kryzhevich]{kryzhevich@gmail.com}
\author{Eugene Stepanov}
\address[Eugene Stepanov]{
St.Petersburg Branch
of the Steklov Mathematical Institute of the Russian Academy of Sciences,
Fontanka 27,
191023 St.Petersburg,
Russia
\and
Higher School of Economics, Faculty of Mathematics, Usacheva str. 6, 119048 Moscow, Russia,
\and
Department of Mathematical Physics, Faculty of Mathematics and Mechanics,
St. Petersburg State University, Universitet\-skij pr.~28, Old Peterhof,
198504 St.Peters\-burg, Russia
}
\email[Eugene Stepanov]{stepanov.eugene@gmail.com}
\thanks{
The work of the first author was supported by Gda\'{n}sk University of Technology by the
DEC 14/2021/IDUB/I.1 grant under the Nobelium - ’Excellence Initiative - Research University’
program.
	The work of the second author
is partially supported by RFBR grant \#20-01-00630A
}

\date{December 1, 2021}

\begin{abstract}
	We give a constructive proof of a global controllability result for an autonomous system of ODEs guided by bounded 
	locally Lipschitz and divergence free (i.e.\ incompressible) vector field, when the phase space is the whole Euclidean space
	and the vector field satisfies so-called vanishing mean drift condition. For the case when the ODE is defined over some smooth compact connected Riemannian manifold, we  significantly strengthen the assertion of the known controllability theorem in absence of nonholonomic constraints by proving that one can find a control steering the state vector from one given point to another by using the observations of only the state vector, i.e., in other words, by changing slightly the vector field, and such a change can be made small not only in uniform, but also in Lipschitz (i.e. $C^1$) topology.
\end{abstract}

\keywords{global controllability, Poincar\'{e} recurrence theorem, Poisson stable points}

\maketitle

\section{Introduction}
Consider the ordinary differential equation (ODE)
\begin{equation}\label{eq_ode1}
\dot{x}=V(x)
\end{equation}
where $V$ is a globally bounded smooth (or at least locally Lipschitz) vector field in a Euclidean phase space $\R^d$ satisfying 
\textit{divergence free} (or else also called \textit{incompressibility}) condition
\[
\div V=0,
\]
where for locally Lipschitz vector fields the divergence operator is defined almost everywhere due to Rademacher’s theorem.
The classical point-to-point controllability problem is that of finding, given two points $p$ and $q$ in the phase space, the 
control function $u=u(t)$ steering the state vector $x(\cdot)$ from $p$ to $q$, i.e.\, formally, such that
	the solution (trajectory) of the ODE
\begin{equation}\label{eq_odectrl2}
\dot x = V(x)+u(t),
\end{equation}
starting at $x(0)=p$ satisfies $x(T)=q$ for some $T>0$. 
Usually the control is required to be small.
The problem has been interpreted in~\cite{BurNovIv16-fish} in the following way: 
a fish in an unbounded turbulent ocean (modelled by the phase space $\R^d$) with the flow velocity field given by $V$ is
able to move with its own velocity $u$ not exceeding in modulus the given value $\varepsilon>0$. 
The assumptions of boundedness and incopressibility of $V$ are quite natural in this setting. The control problem termed in these words is that of asking whether the fish can reach any point starting from an arbitrary one. If the answer to this question is positive, the ODE~\eqref{eq_ode1} is called \textit{globally controllable}.
 	
 If the phase space of~\eqref{eq_odectrl2} is a smooth compact manifold instead of $\R^d$, then the answer to the posed question is positive and provided by the known global controllability result (theorem~4.2.7 in~\cite{Bloch15-control} where it is formulated for analytic vector fields
 on compact Riemannian manifolds). Nevertheless in the whole $\R^d$
 the incompressibility condition of $V$ is not enough for global controllability to hold as can be seen just taking $V$ to be constant vector field with sufficiently large norm. 
 However,  it has been proven in~\cite{BurNovIv16-fish} that if $V$ is incompressible and has \emph{vanishing mean drift}
 (called small mean drift in~\cite{BurNovIv16-fish}) in the sense
 \begin{equation}\label{eq_VMD1}
 \lim_{\ell\to \infty} \sup_{x\in \R^d}
 \left|\frac{1}{\ell^d}\int_{[0,\ell]^d} V(x+y)\, dy\right|=0,
 \end{equation}
 then~\eqref{eq_ode1} is globally controllable (this result has been further extended in~\cite{BurNovIv17-fish} to nonautonomous ODEs).
 	Roughly speaking, the assumption of vanishing mean drift means that the average value of the flow velocity over big boxes vanishes with the corresponding limit uniform with respect to the selection of those big boxes.
 
 In~\cite{KryzhSte19} we took a completely different approach to the proof of this global controllability result obtaining, as a side product, a version of C.~Pugh's closing lemma for divergence free vector fields on the whole $\R^d$ satisfying vanishing mean drift condition~\eqref{eq_VMD1}. However, both our proof of the controllability result and the original proof from~\cite{BurNovIv16-fish} are inherently nonconstructive: in other words they assure the fish that  it can reach any desired destination but do not give any clue of how to do it. The goal of this paper is to provide a constructive proof of the global controllability result for~\eqref{eq_ode1}, namely, providing not only the result itself but an explicit construction of
 the steering control $u(\cdot)$. Our basic instruments 
 will be the main result of~\cite{KryzhSte19} which gives a constructive way of changing slightly the vector field so that all the points of the phase space become nonwandering for the corrected vector field.
 
 We also compare the proven controllability result over the whole $\R^d$ with the original classical setting of the global controllability theorem, 
 i.e.\ with the case when  the ODE~\eqref{eq_ode1} is defined over some smooth compact connected Riemannian manifold without boundary instead of $\R^d$. In this case we  significantly strengthen its assertion, by proving that one can find a control $u(\cdot)$ in the form $u(t)= W(x(t))$, where $x(\cdot)$ is the trajectory of the controlled system (i.e.\ we may control the system by using the observations of only its state vector), and the vector field $W$ is small not only in uniform, but also in Lipschitz (i.e. $C^1$) norm. 
In other words, we give a constructive proof for the Connecting Lemma for orbits in the case of divergence-free vector fields. 

\section{Notation and preliminaries}


The Euclidean norm in the finite-dimensional space $\R^d$ will be denoted by $|\cdot|$,
 $B_r(x)\subset \R^d$ stands for the open Euclidean ball of radius
$r$ centered at $x$, and $x\cdot y$ stands
for the usual scalar product of $x\in \R^d$ and $y\in \R^d$, and $\mathcal{L}^d$ stands for tor the $d$-dimensional Lebesgue measure. 
For any set $D\subset \R^d$, we let $\bar D$ be its closure.
By 
$\Lip(\R^d;\R^d)$ (resp.\ $\Lip_{loc}(\R^d;\R^d)$, $C^1(\R^d;\R^d)$)
we denote
the set of  
Lipschitz (resp.\ locally Lipschitz, continuously differentiable)
functions $f\colon \R^d\to \R^d$. 
The standard uniform norm of functions and vector functions will be denoted by $\|\cdot\|_\infty$.
For a $V\in \Lip(\R^d;\R^d)$ we denote by $\Lip\, V$ its least Lipschitz constant, and set
$\|V\|_{\Lip}:=\|V\|_\infty +\Lip\, V$. The notation $C_0^\infty(U)$ stands for the class of infinitely differentiable real-valued functions with compact support
in an open $U\subset \R^d$. 

For a diffeomorphism $f\colon U\subset \R^d\to  f(U)\subset \R^d$, where $U\subset \R^d$ is open, and a vector field $V$ over $U$
we denote the push-forward $f_* V$ of $V$ by $f$ by the formula
\[
(f_* V)(y):= (Df)(f^{-1}(y)) V (f^{-1}(y))=(Df^{-1})^{-1}(y) V (f^{-1}(y)),
\] 
so that if $x(\cdot)$ is a trajectory of the ODE $\dot{x}=V(x)$ in $U$, then  $y(\cdot):=f( x(\cdot))$ is a trajectory of
the ODE $\dot{y}=(f_*V)(x)$ in $f(U)$. Of course an identical (up to notation) definition can be done for vector fields on smooth manifolds and diffeomorphisms of open subsets of smooth manifolds. For a distribution $u$ (in particular, a measure) over $U$ we define its pushforward $f_{\#}u$ by $f$ over $f(U)$
as
\[
\langle\varphi, f_{\#}u\rangle : = \langle\varphi\circ f, u\rangle
\]
for every test function $\varphi \in C_0^\infty(\R^d)$, where $\langle \varphi, v\rangle $ stands for the action of a distribution
$v$ on a test function $\varphi$, and $\varphi\circ f$ stands for the composition of $\varphi$ with $f$, once $f$ is sufficiently smooth (e.g. when $u$ is a finite Borel measure, then this definition can be extended to $\varphi$ just bounded and continuous and $f$ just Borel). 

\section{Global controllability}

The following theorem is the main result of this paper. 

\begin{theorem}\label{th_control1constr}
	Let $V\in C^1(\R^d;\R^d)\cap \Lip(\R^d;\R^d)$ be a bounded incompressible vector field
	with uniformly continuous first derivatives, and  satisfy vanishing mean drift condition~\eqref{eq_VMD1}. Then for every couple of points $\{p,q\}\subset\R^d$ and every $\varepsilon>0$ there is a piecewise continuous function $u\colon \R^+\to\R^d$ (``control'') with $\|u\|_\infty <\varepsilon$ such that
	the trajectory of the ODE~\eqref{eq_odectrl2}
	satisfying $x(0)=p$ passes through $q$, i.e.\ $x(T)=q$ for some $T>0$.
\end{theorem}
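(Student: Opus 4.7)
My strategy is to reduce the theorem to a constructive Connecting Lemma for divergence-free fields: produce an explicit small vector field $W$ on $\R^d$ with $\|W\|_\infty<\varepsilon$ such that $p$ and $q$ lie on the same forward orbit of $V+W$. Once this is done, setting $u(t):=W(x(t))$ for $t$ in a suitable interval $[0,T]$ yields the required control, since then $\dot x=V(x)+u(t)=(V+W)(x)$ with $x(0)=p$ and $x(T)=q$ by construction. Piecewise continuity of $u$ will follow from piecewise continuity of $W$ along the trajectory, the latter crossing the boundary of the support of each localized piece of $W$ only finitely many times.

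To produce such a $W$, I would proceed in two stages. First, invoke the main result of~\cite{KryzhSte19}, a constructive version of Pugh's closing lemma for incompressible fields with vanishing mean drift: it supplies an explicitly described divergence-free perturbation $W_0$ with $\|W_0\|_\infty<\varepsilon/2$ such that every point of $\R^d$, in particular $p$ and $q$, is nonwandering for $\tilde V:=V+W_0$. Second, fix a finite polygonal chain $p=p_0,p_1,\ldots,p_N=q$ with consecutive spacing less than a small parameter $\delta>0$. The nonwandering property at each $p_i$ yields trajectory segments of $\tilde V$ re-entering $B_\delta(p_i)$ and $B_\delta(p_{i+1})$; small, compactly supported, divergence-free correctors $W_1,\ldots,W_N$ of individual norm less than $\varepsilon/(2N)$ are then constructed inside pairwise disjoint linking tubes near the points $p_i$, each designed to splice two consecutive trajectory segments of $\tilde V$ into a single one. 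The terminal corrector is chosen so that the spliced orbit passes through $q$ exactly, and the total perturbation $W:=W_0+\sum_i W_i$ then satisfies $\|W\|_\infty<\varepsilon$ and carries $p$ to $q$.

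The main obstacle is the splicing stage, which amounts to a constructive divergence-free analogue of Hayashi's Connecting Lemma with explicit uniform control on the perturbation size. Each corrector $W_i$ must (i) be divergence free, which can be arranged by explicit stream-function or local Hamiltonian constructions inside a small ball, (ii) have support disjoint from the already-chosen trajectory segments and from the supports of the other $W_j$, so that splicing does not destroy the previously arranged pieces, and (iii) actually redirect the orbit from the $\delta$-neighborhood of $p_i$ into that of $p_{i+1}$ in finite time. The vanishing mean drift hypothesis enters precisely here, guaranteeing that large regions of near-recurrent phase space exist in which such disjoint, small, divergence-free perturbations can be installed without being overwhelmed by systematic drift of the ambient flow of $V$.
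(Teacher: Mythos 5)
Your overall architecture matches the paper's: perturb $V$ using the main result of~\cite{KryzhSte19} so that recurrence becomes available, lay down a chain of closely spaced intermediate points from $p$ to $q$, and splice nearly-recurrent trajectory segments of the perturbed field. But at the splicing stage there is a genuine gap, and it is the heart of the matter. You require autonomous, compactly supported, divergence-free correctors $W_1,\dots,W_N$ whose supports are pairwise disjoint and disjoint from the previously selected trajectory segments, and which moreover make the orbit hit $q$ exactly. You do not construct them; you only list the properties they must have, and you yourself observe that this amounts to a constructive divergence-free analogue of Hayashi's Connecting Lemma with uniform control on the perturbation size. That is precisely the hard, unproved step: an orbit of the corrected field may re-enter any neighborhood of $p_i$ infinitely often, so keeping $\supp W_i$ away from the segments you wish to preserve is exactly the classical difficulty of connecting-lemma arguments, and exact hitting of a prescribed point by an autonomous perturbation is delicate. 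The theorem, however, only asks for a time-dependent control, and the paper exploits this: Lemma~\ref{lm_trans2} shows that a piecewise continuous control of norm $<\varepsilon$, active only on a short terminal time window $(s-\tau,s]$, steers a trajectory to any target within distance $\rho=\tau\varepsilon/4$ of its endpoint. Since that perturbation lives in time rather than in space, no support-disjointness or divergence-free requirement arises and the splicing reduces to an explicit elementary computation; your insistence that the correctors be divergence free is in any case not demanded by the statement.

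A second, smaller gap concerns the endpoints. Nonwandering of $p$ is not enough to start the scheme at $p$ itself, since a nonwandering point need not lie on a recurrent orbit. The paper instead uses that $\mathcal{L}^d$-a.e.\ point is Poisson stable for the corrected field (Proposition~\ref{prop_Wgensmall1}(i)), runs the inductive construction from a Poisson stable point $x_1'$ arbitrarily close to $p$, and only at the end modifies the vector field by a localized pushforward perturbation (Remark~\ref{rm_tarj1b}), small in the uniform norm, so that the controlled trajectory departs exactly from $p$; the final control is then $u(t)=\bar V(\bar x(t))-V(\bar x(t))+u_n(t)$, with the three contributions each below $\varepsilon/3$. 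Your plan addresses exact arrival at $q$ but not exact departure from $p$, and with the autonomous-corrector route even the arrival part rests on the unproved splicing lemma. To complete your argument along your own lines you would essentially have to prove the constructive connecting lemma you postulate, a substantially harder task than the time-dependent splicing the paper actually uses.
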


The rest of this section will be dedicated to the proof of the above Theorem~\ref{th_control1constr}.

The following statement uses a construction from proposition~3.3 of~\cite{KryzhSte19} and is of some independent interest.

\begin{lemma}\label{lm_trans2}
	Let $x\colon [a,s]\to \R^d$ be a trajectory of
	the ODE
	\[
	\dot{x}(t)= F(t,x)
	\]
	over some time interval $[a,s]$, where $F\colon \R\times \R^d\to \R^d$ is bounded and continuous. 
	Then for every $\varepsilon>0$ there is a $\tau\in (0,s-a)$ and a $\rho>0$ depending on $\varepsilon$ and $\tau$ such that
	whenever $|x(s)-y|< \rho$,  there is a piecewise continuous control $u_\varepsilon(\cdot)$ with $|u_\varepsilon(\cdot)| <\varepsilon$ different from zero only on $(s-\tau, s]$,  for which 
	there is a solution 
	$x_\varepsilon\colon [a,s]\to \R^d$ of
	the ODE
	\begin{equation}\label{eq_Xdel1}
	\dot{x}_\varepsilon(t)= F(t,x_\varepsilon(t)) +u_\varepsilon(t)
	\end{equation}	
	coinciding with $x(\cdot)$ over  
	$[a,s-\tau]$ and having
	$x_\varepsilon(s)=y$.
\end{lemma}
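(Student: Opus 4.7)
My plan is to realize the correction via a constant control on the short terminal interval $(s-\tau,s]$ and then recover the target $y$ by a fixed-point / shooting argument.

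More precisely, I would try the ansatz $u_\varepsilon(t)\equiv c/\tau$ on $(s-\tau,s]$ and $u_\varepsilon\equiv 0$ elsewhere, parametrized by a vector $c\in\R^d$ with $|c|<\varepsilon\tau$ (so that $|u_\varepsilon|<\varepsilon$ automatically). On $[a,s-\tau]$ the perturbed equation coincides with the original one, so one may just set $x_\varepsilon\equiv x$ there. On $(s-\tau,s]$, Peano's theorem provides at least one solution $x_\varepsilon^c$ of \eqref{eq_Xdel1} starting from $x(s-\tau)$. Integrating~\eqref{eq_Xdel1} gives
\[
x_\varepsilon^c(s)=x(s-\tau)+\int_{s-\tau}^{s}F(t,x_\varepsilon^c(t))\,dt+c=x(s)+c+r(c),
\]
where
\[
r(c):=\int_{s-\tau}^{s}\bigl(F(t,x_\varepsilon^c(t))-F(t,x(t))\bigr)\,dt.
\]
So the problem reduces to finding $c$ such that $c+r(c)=y-x(s)=:\eta$.

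The key estimate is that $r(c)=o(\tau)$ uniformly for $|c|\le\varepsilon\tau$. Indeed, boundedness $\|F\|_\infty\le M$ yields $|x_\varepsilon^c(t)-x(t)|\le(2M+\varepsilon)\tau$ on $(s-\tau,s]$, so $F$ is evaluated on a fixed bounded set, where it is uniformly continuous with some modulus $\omega$. Hence $|r(c)|\le\tau\,\omega((2M+\varepsilon)\tau)$, which tends to $0$ faster than $\tau$. Now choose $\tau$ so small that $\omega((2M+\varepsilon)\tau)<\varepsilon/2$, and set $\rho:=\varepsilon\tau/2$. For $|\eta|<\rho$, the map $T(c):=\eta-r(c)$ sends the closed ball $\overline{B_{\varepsilon\tau}(0)}\subset\R^d$ into itself, and a fixed point $c_*=T(c_*)$ is exactly a solution of $c+r(c)=\eta$, yielding $x_{\varepsilon}^{c_*}(s)=y$ as required.

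The one delicate point is the continuity of $c\mapsto r(c)$ needed to invoke Brouwer's theorem, since $F$ is merely continuous and solutions of~\eqref{eq_Xdel1} need not be unique. I expect this to be the main obstacle, and I would circumvent it by the standard trick of approximating $F$ uniformly by smooth (hence locally Lipschitz) vector fields $F_n$, applying Brouwer to the corresponding continuous maps $T_n$ to extract $c_n$ with $T_n(c_n)=c_n$, and then passing to the limit: by Arzel\`a--Ascoli the associated trajectories $x_{\varepsilon,n}^{c_n}$ (being equi-Lipschitz thanks to the bound $\|F_n\|_\infty+\varepsilon$) converge along a subsequence, together with $c_n\to c_*$, to a solution of the original equation~\eqref{eq_Xdel1} with control $c_*/\tau$ and terminal value $y$. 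This is essentially the approach of proposition~3.3 of~\cite{KryzhSte19}, on which the lemma explicitly relies.
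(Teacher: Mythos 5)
Your argument is correct, but it takes a genuinely different route from the paper. The paper does not shoot: it freezes the field at $z=x(s)$, solves $\dot{\bar x}=F(t,z)$ from $x(s-\tau)$, and defines the corrected trajectory explicitly as $x_\varepsilon(t)=\bar x(t)+\alpha(t-s+\tau)$ with $\alpha=(y-\bar x(s))/\tau$; the control is then simply read off as $u_\varepsilon(t)=F(t,z)-F(t,x_\varepsilon(t))+\alpha$, so no existence, uniqueness, or fixed-point issue ever arises --- the solution is built first and the (feedback-type, explicitly computable) control afterwards, with $|u_\varepsilon|<\varepsilon/2+\varepsilon/4$ by the same smallness of $\tau$ you use. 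Your version instead fixes a \emph{constant} control $c/\tau$ on $(s-\tau,s]$ and determines $c$ by Brouwer's theorem applied to $T(c)=\eta-r(c)$, with the non-uniqueness of solutions for merely continuous $F$ handled by smoothing $F$ and an Arzel\`a--Ascoli passage to the limit; your bookkeeping is sound (the estimate $|r(c)|\le\tau\,\omega((2M+\varepsilon)\tau)$, the choice $\rho=\varepsilon\tau/2$, and the fact that $T$ maps the closed ball $\overline{B_{\varepsilon\tau}(0)}$ into the open one, so the strict bound $|u_\varepsilon|<\varepsilon$ survives the limit), though you should state explicitly that for the approximating fields the residual acquires an extra term controlled by $\|F_n-F\|_\infty$ on the relevant compact set, harmless for $n$ large. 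What each approach buys: yours yields the aesthetically simpler constant control, but at the price of Brouwer plus a compactness extraction, which is less in the constructive spirit the paper is explicitly aiming for; the paper's construction is fully explicit and, in the Lipschitz case, directly produces the quantitative choices $\tau<\min(s-a,\varepsilon/4L,\varepsilon/8L\|F\|_\infty)$, $\rho=\tau\varepsilon/4$ of Remark~\ref{rm_trans2a}, which are then invoked verbatim (via~\eqref{eq_taurho1} and~\eqref{eq_taurho3}) in the proof of Theorem~\ref{th_control1constr}; if your proof were substituted, those constants would have to be rederived from your condition $\omega((2M+\varepsilon)\tau)<\varepsilon/2$ (which for $\omega(r)=Lr$ gives comparable, but different, expressions).
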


\begin{remark}\label{rm_trans2a}
Note that in Lemma~\ref{lm_trans2} one can choose $\tau$ arbitrarily small, but the smaller is $\tau$, the smaller becomes also $\rho$.
In particular, it is easily deduced from the proof that when 
\[
|F(t,\bar z)-F(t,z)|\leq L |\bar z-z|
\]
for some $L>0$, then one can take 
\begin{equation}\label{eq_taurho1}
\tau< \min\left(s-a, \varepsilon/4L,  \varepsilon/8L \|F\|_\infty\right), \quad \rho=\tau\varepsilon/4.
\end{equation}
\end{remark}

\begin{proof}
	Denote $z:=x(s)$. Given a $\varepsilon>0$, choose a $\delta>0$ depending on $\varepsilon$ so small that
	\begin{equation}\label{eq_SFcorr1}
	|F(t,\bar z)-F(t,z)|< \varepsilon/4,
	\end{equation}
	for all $\bar z\in B_\delta(z)$ and all $t\in [a,s]$, and a $\tau\in (0,s-a)$ (depending on $\delta$, hence on $\varepsilon$) so small that
	\begin{eqnarray}
	\label{eq_SFcorr3}
	\|F\|_\infty\tau< \delta/2,
	\end{eqnarray}
	so that in particular
	\begin{eqnarray}
	\label{eq_SFcorr4}
	|x(s-\tau)-z|\leq \|F\|_\infty\tau <\delta/2.
	\end{eqnarray}
	
	Denote $\bar x(\cdot)$ over $[s-\tau, s]$ the trajectory
	of the ODE
	\[
	\dot{\bar x}(t)= F(t,z).
	\]
	satisfying
	$\bar x(s-\tau)= x(s-\tau)$.
	We get that both
	$\bar x(t)\in B_\delta(z)$ and $x(t)\in B_\delta(z)$ for all
	$t\in [s-\tau, s]$ due to~\eqref{eq_SFcorr3}  and~\eqref{eq_SFcorr4}.
	Moreover, one has
	\begin{equation}\label{eq_SFcorr1a}
	|z-\bar x(s)|=|x(s)-\bar x(s)| < \tau \varepsilon/4
	\end{equation}
	in view of~\eqref{eq_SFcorr1}.

	Fixed an arbitrary $y\in B_\rho(z)$ with a $\rho>0$ to be chosen later,
	we set 
	\[
	\alpha := (y- \bar x (s))/\tau\in \R^d.
	\]
	One has
	\begin{align*}
	|y-\bar x(s)| &\leq |y-z| + |z-\bar x(s)|\\
	& \leq |y-z| + \tau \varepsilon/4 \quad\mbox{by~\eqref{eq_SFcorr1a}}\\
	&< \rho + \tau \varepsilon/4,
	\end{align*}
	so that once we choose
	\[
	\rho:=\tau \varepsilon/4,
	\]
	we get
	\[
	|y-\bar x(s)|< \tau \varepsilon/2, 
	\]
	and hence
	\[
	|\alpha| < \varepsilon/2.
	\]
	
	Define
	$x_\varepsilon(t)$
	by 
	\[
	x_\varepsilon(t) := 
	\left\{
	\begin{array}{rl}
	x(t), & t\in [a, s-\tau],\\
	\bar x(t) +\alpha (t-s+\tau), & t\in (s-\tau,s].
	\end{array}
	\right. . 
	\]
	Note that $\dot{x}_\varepsilon(t)= F(t,z)+\alpha$ for $t\in (\tau-s,s)$, and hence $x_\varepsilon$ satisfies~\eqref{eq_Xdel1} with
	\[
	u_\varepsilon(t):=
	\left\{
	\begin{array}{rl}
	0,& t\in [a, s-\tau],\\
	F(t,z)- F(t,x_\varepsilon(t)) +\alpha, & t\in (s-\tau, s].
	\end{array}
	\right.
	\]
	Further, by the choice of $\alpha$ one has $x(s)=y$. 
	Finally, 
	\begin{align*}
	|u_\alpha(t)|& \leq |\alpha|+ |F(t,z)- F(t,x_\varepsilon(t))|\\
	& <
	\frac{\varepsilon}{2} + \sup_{\bar z \in B_\delta(z)} |F(t,\bar z)-F(t,z)|\\
	& < \frac{\varepsilon}{2} + \frac{\varepsilon}{4} 
	\quad\mbox{by~\eqref{eq_SFcorr1}}
	\\
	& < \varepsilon,
	\end{align*}
	which proves the claim.
\end{proof}

The following statement summarizes the results from~\cite{KryzhSte19}.

\begin{proposition}\label{prop_Wgensmall1}
 Suppose that $V\in \Lip_{loc}(\R^d;\R^d)$
	be a bounded incompressible
	vector field with vanishing mean drift.
	Then, given an $\varepsilon>0$, there is a bounded vector field
	$\tilde V\in \Lip_{loc}(\R^d;\R^d) $
such that	
\begin{itemize}
	\item[(i)] every point $x\in\R^d$ is nonwandering for the ODE
\begin{equation}\label{eq_tildeODE1}
	\dot{x}=\tilde V(x).
\end{equation}
	Moreover $\mathcal{L}^d$-a.e.\ point $x'\in\R^d$ is Poisson stable, i.e.\ 
	for the trajectory $x(\cdot)$ of~\eqref{eq_tildeODE1} with $x(0)=x'$ 
	there are sequences $\{t_k^\pm\}_k\subset \R$ with $\lim_k t_k^\pm =\pm\infty$
	such that
	\[
	x'=\lim_{k\to \infty} x(t_k^\pm).
	\]  
	\item[(ii)] $\|\tilde V - V \|_\infty < \varepsilon$, and, moreover, if $V\in C^1(\R^d;\R^d)\cap \Lip(\R^d;\R^d)$
	and has uniformly continuous first derivatives, then one can assume
	$\|\tilde V - V \|_{\Lip} < \varepsilon$,
	\item[(iii)] $\|\mathrm{div}\, \tilde V \|_\infty  < \varepsilon$,
	\item[(iv)] for some $p\in ((d-1)/2, d/2)$, and  
	$\alpha> \bar\alpha=\bar\alpha(p,\varepsilon)$ one has $\mathrm{div}\, \psi \tilde V  =0$, where
	$\psi(x):=(|x|^2 +\alpha^2)^{-p}$.
\end{itemize}
Moreover, every vector field $\tilde V\in \Lip_{loc}(\R^d;\R^d)$ satisfying~(iv), necessarily satisfies~(i).
\end{proposition}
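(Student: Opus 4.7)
The plan is to deduce (i) from (iv) alone, treating (ii)--(iv) as the established output of~\cite{KryzhSte19}. The starting observation is that (iv) is essentially the statement that the flow of $\tilde V$ preserves the absolutely continuous measure $\mu := \psi\,\mathcal{L}^d$, and the whole argument exploits the precise rate at which $\mu$ grows at infinity, which is governed by the admissible range of $p$.

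First I would note that $\tilde V$ is complete (being bounded and locally Lipschitz) and denote its flow by $\{\phi_t\}$. The Jacobi formula yields
\[
\frac{d}{dt}\bigl[\psi(\phi_t(x))\det D\phi_t(x)\bigr] = \bigl[\nabla\psi\cdot\tilde V + \psi\,\div\tilde V\bigr](\phi_t(x))\,\det D\phi_t(x),
\]
and the bracket on the right-hand side is exactly $\div(\psi\tilde V)$, hence vanishes by~(iv). Thus $\psi(\phi_t(x))\det D\phi_t(x) \equiv \psi(x)$, and the change-of-variables formula shows that $\phi_t$ preserves $\mu$. Since $\psi>0$, the measure $\mu$ is $\sigma$-finite and has exactly the same null sets as $\mathcal{L}^d$. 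The quantitative input is that
\[
\mu(B(0,R)) = d\omega_d\int_0^R r^{d-1}(r^2+\alpha^2)^{-p}\,dr \leq C R^{d-2p}
\]
for all $R$ large, and under $p\in((d-1)/2,d/2)$ we have $d-2p\in(0,1)$, so $\mu(B(0,R))$ grows \emph{strictly sublinearly} in $R$.

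I would then prove nonwandering of every $x_0\in\R^d$ by contradiction. If some bounded open neighborhood $U\subset B(x_0,R_0)$ of $x_0$ and some $T>0$ satisfied $\phi_t(U)\cap U=\emptyset$ for all $t>T$, then, fixing any $T'>T$, the elementary identity $\phi_{kT'}(U)\cap\phi_{mT'}(U)=\phi_{mT'}\bigl(\phi_{(k-m)T'}(U)\cap U\bigr)$ shows that the iterates $\phi_{kT'}(U)$, $k=0,1,\ldots,N-1$, are pairwise disjoint. Boundedness of $\tilde V$ confines $\phi_{kT'}(U)$ to $B(x_0,R_0+kT'\|\tilde V\|_\infty)$, and measure preservation yields
\[
N\mu(U) = \sum_{k=0}^{N-1}\mu(\phi_{kT'}(U)) \leq \mu\bigl(B(x_0,R_0+NT'\|\tilde V\|_\infty)\bigr) \leq C(R_0+NT'\|\tilde V\|_\infty)^{d-2p}.
\]
Since $\mu(U)>0$ and $d-2p<1$, the left side grows linearly in $N$ while the right side grows sublinearly, producing the required contradiction for $N$ large. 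The very same reasoning, applied to $W\cap B(0,R)$ for $R$ arbitrary, rules out any wandering set of positive $\mu$-measure.

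The flow of $\tilde V$ is therefore conservative on the $\sigma$-finite space $(\R^d,\mu)$; the Halmos recurrence theorem for such systems, applied both to $\phi_1$ and to $\phi_{-1}$, then gives recurrence of $\mu$-a.e.\ (equivalently $\mathcal{L}^d$-a.e.) point along both positive and negative integer times, producing the two-sided sequences $t_k^\pm\to\pm\infty$ demanded by Poisson stability, which is exactly~(i). The main technical obstacle to overcome is that $\mu(\R^d)=+\infty$ whenever $p<d/2$, so the classical Poincar\'e recurrence theorem on a finite-measure space is not directly at hand; the role of the lower bound $p>(d-1)/2$ is precisely to keep $\mu$ growing slowly enough compared with the spreading speed $\|\tilde V\|_\infty$ of trajectories for the counting estimate above to close.
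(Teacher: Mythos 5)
Your argument is correct, and it is in substance the same mechanism that the paper relies on, with one structural difference: the paper's proof of this proposition is essentially a citation (lemma~4.7 and theorem~4.8 of~\cite{KryzhSte19} for the construction of $\tilde V$ with (ii)--(iv) and for the fact that (iv) forces the Poincar\'e-type recurrence of corollary~4.5 there, plus the standard argument of~\cite[proposition~4.1.18]{KatHass95} to pass to Poisson stability of a.e.\ point), whereas you keep the citation only for (ii)--(iv) and reprove the implication (iv)~$\Rightarrow$~(i) from scratch. Your reproof --- invariance of $\mu=\psi\,\mathcal{L}^d$ under the flow, the growth bound $\mu(B(0,R))\leq C R^{d-2p}$ with $d-2p\in(0,1)$, bounded speed $\|\tilde V\|_\infty$, and the disjointness/counting contradiction, followed by Hopf--Halmos conservativity for the time-one maps in both time directions --- is exactly the ``infinite invariant measure with sublinear growth versus linear spreading'' idea underlying the cited results, so what you gain is self-containedness rather than a new method. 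Two small points deserve a word if you write this up: (a) the Jacobi formula as you state it presupposes differentiability of $\phi_t$ in $x$, while $\tilde V$ is only locally Lipschitz, so the preservation of $\mu$ should be justified either by a.e.\ differentiability of the bi-Lipschitz flow maps, by smoothing, or by uniqueness for the continuity equation with Lipschitz drift (the conclusion is standard and true); (b) the Halmos recurrence theorem gives return of a.e.\ point of a given measurable set to that set, and to get pointwise Poisson stability (return to \emph{every} neighborhood of the point along $t_k^\pm\to\pm\infty$) one still needs the routine countable-basis intersection argument, which is precisely the step the paper delegates to~\cite[proposition~4.1.18]{KatHass95}. Neither is a gap, only a gloss.
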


\begin{proof}
	Part of claim~(i) (all points are nonwandering) and claims (ii)-(iv) are a summary of lemma~4.7 and theorem~4.8 from~\cite{KryzhSte19}. 
	Note that in theorem~4.8 of~\cite{KryzhSte19} it has been proven in fact, that for
	every vector field $\tilde V\in \Lip_{loc}(\R^d;\R^d)$ satisfying~(iv) the thesis of
	the Poincar\'{e} recurrence theorem as formulated in corollary~4.5 of~\cite{KryzhSte19}
	holds, which in particular means (see the proof of~\cite[corollary~4.5]{KryzhSte19} 
	or alternatively of~\cite[proposition 4.1.18]{KatHass95}) that
	claim~(i) is fully satisfied, i.e. not only all points are nonwandering but also $\mathcal{L}^d$-a.e. point is Poisson stable (note that in corollary~4.5 of~\cite{KryzhSte19} one only speaks of Poisson stability for positive semitrajectories; the proof for negative semitrajectories is completely symmetric).
\end{proof}

We are now able to prove Theorem~\ref{th_control1constr}.

\begin{proof}[Proof of Theorem~\ref{th_control1constr}]
Let $\tilde V$ be as in Proposition~\ref{prop_Wgensmall1} with $\varepsilon/3$ instead of $\varepsilon$,
i.e.\
\[
\|\tilde V - V \|_\Lip < \varepsilon
\]
and 
$\mathcal{L}^d$-a.e.\ point $x\in\R^d$ is Poisson stable
for the equation
\begin{equation}\label{eq_tildeODE3}
\dot{x}=\tilde V(x).
\end{equation}
In particular one has
\begin{equation}\label{eq_LipVtild1}
\Lip \tilde V\leq \Lip V +\varepsilon, \quad \Lip \|\tilde V\|_\infty \leq \|V\|_\infty +\varepsilon.
\end{equation}

Choose a $\rho>0$ such that
\begin{equation}\label{eq_taurho2}
\rho <\min \left(\frac{1}{4}, \frac{\varepsilon^2}{144(\Lip V +\varepsilon)}, \frac{\varepsilon^2}{288(\Lip V +\varepsilon)(\|V\|_\infty +\varepsilon)} \right), \tau=\rho\varepsilon/12.  
\end{equation}
Choose
$\{x_{j}\}_{j=1}^n\subset \R^d$ such that
\[
x_{1}=p, x_{n}=q, \quad |x_{j}-x_{j+1}|< \rho/4,\quad  j=1,\ldots, n-1.
\]  
Let also
\[
x_{n}':=x_n=q
\]
For an arbitrary $\delta\in (0,\rho/8)$ there are Poisson stable points
$\{x_{j}'\}_{j=1}^{n-1}\subset \R^d$ for the equation~\eqref{eq_tildeODE3} satisfying
$x_j'\in B_\delta(x_j)$, hence in particular
\[
|x_{j}'-x_{j+1}'| \leq  |x_{j}-x_{j+1}| +2\delta < \rho/4 + 2\rho/8=\rho/2,\quad  j=1,\ldots, n-1.
\]  
Let $T_j>3/\varepsilon$ be such that a trajectory $x_j(\cdot)$ of~\eqref{eq_tildeODE3} with $x_j(0)=x_j'$ satisfies
\[
|x_j(T_j)-x_j'|\leq \rho/2, \quad j=1,\ldots, n-1.
\]
It is worth observing that in particular~\eqref{eq_taurho2} and~\eqref{eq_LipVtild1} imply
\begin{equation}\label{eq_taurho3}
\rho <\min \left(\frac{T_j (\varepsilon/3)}{4}, \frac{(\varepsilon/3)^2}{16\Lip \tilde V}, \frac{(\varepsilon/3)^2}{32\Lip \tilde V \|\tilde V\|_\infty} \right), \tau=\rho\frac{(\varepsilon/3)}{4}.  
\end{equation}

Construct $u_k=u_k(t)$, $k=1,\ldots, n$, with $\|u_k\|_\infty\leq \varepsilon/3$ inductively. Let $u_0:=0$.
Note that the trajectory of~\eqref{eq_tildeODE3}
starting at $x_1'$
arrives at $x_1(T_1)$ at $\tau_1 = T_1 > 0$.
Lemma~\ref{lm_trans2} applied with $a:= 0$, $s:= \tau_{1}$, $\varepsilon/3$ in place of $\varepsilon$, 
$F(t,x):= \tilde V(x)$, $y:=x_{2}'$
provides the existence of a control $u_{1}$ 
over $[0,\tau_{1}]$ which is nonzero only over $[\tau_1-\tau, \tau_1]$ such that the trajectory of
\begin{align*}
\dot{x}(t)&= \tilde V(x(t)) +u_0(t) = \tilde V(x(t)), 
\end{align*}
arrives at $x_2'$ at $\tau_1$.
Assume now that for some $k\in \N$ the function $u_k= u_k(t)$ be a control defined over $[0,\tau_k]$ such that the trajectory
of 
\begin{align*}
\dot{x}(t)&= \tilde V(x(t)) +u_k(t)
\end{align*}
starting at $x_1',$ 
arrives at $x_k'$ at some $\tau_k>0$, i.e.\ $x(\tau_k)=x_k$.
Let $u_{k+1}$ be the control coinciding with $u_k$ over $[0,\tau_k)$ and
over $[\tau_k,\tau_{k+1}]$, where $\tau_{k+1}:=\tau_k+T_k$, being the control provided  
by Lemma~\ref{lm_trans2} with $a:= \tau_k$, $s:= \tau_{k+1}$, $\varepsilon/3$ in place of $\varepsilon$, 
$F(t,x):= \bar V(x)$, $y:=x_{k+1}'$, i.e.\ is nonzero only over $[\tau_{k+1}-\tau, \tau_{k+1}]$ 
Note that the application of Lemma~\ref{lm_trans2}  is possible because~\eqref{eq_taurho3} gives exactly~\eqref{eq_taurho1} with these data. 
 
Proceeding in this way we get that that the trajectory $x(\cdot)$ of the ODE
\begin{align*} 
\dot{x}(t)&= \tilde V(x(t)) +u_n(t),
\end{align*} 
starting at $x(0) = x_1'$, eventually arrives at $x_n'=q$ at some instance $T:=\tau_n>0$, i.e.\ 
$x(T)=x_n'=q$. 

Now, since $\delta>0$ can be chosen arbitrarily small, then one can find by Remark~\ref{rm_tarj1b}
some $\bar V\in \Lip_{loc}(\R^d;\R^d)$ satisfying  $\|\bar V - \tilde V \|_\infty < \varepsilon/3$,
and the trajectory $\bar x(\cdot)$ of the ODE 
\begin{align*}
\dot{x}(t)&= \bar V(x(t)) +u_n(t) 
\end{align*}
satisfying $\tilde x(0)=p$ 
passes through $q$.
It suffices to define now
\[
u(t):= \bar V (\bar x(t))- V (\bar x(t)) + u_n(t),
\] 
and estimate
\[
\|u\|_\infty:= \|\bar V- V \|_\infty + \|u_n\|_\infty \leq \|\bar V- \tilde V \|_\infty +\|\tilde V- V \|_\infty + \|u_n\|_\infty < \varepsilon/3 +\varepsilon/3 + \varepsilon/3= \varepsilon
\]
by construction, to conclude the proof.
\end{proof}

\section{Case of a compact manifold}

It is worth comparing the Theorem~\ref{th_control1constr} with the case when the vector field $V$ and hence the differential
equation~\eqref{eq_ode1} are defined not on $\R^d$ but rather over some compact smooth connected Riemannian manifold $M$. 
In this case we can say more, namely, that one can find a control $u(\cdot)$ in the form $u(t)= W(x(t))$, where $x(\cdot)$ is the trajectory of the controlled system (i.e.\ in more control theoretic terminology we may control the system by observing only its state vector), and the vector field $W$ is small not only in uniform, but also in Lipschitz (i.e. $C^1$) norm.
Here for $C^1$ vector field $V$ on $M$ we denote
\[
\|V\|_{\Lip} :=\sup_{x\in M} |V(x))|_x + \sup_{\{(x,y)\in \M\times M, x\neq y\}} \frac{|P_{\gamma,x,y} V(x)-V(y) |_y}{d_M(x,y)},
\]
where $d_M$ stands for the Riemannian distance, $P_{\gamma,x,y}$ stands for the canonical parallel transport operator of a vector
in the tangent space $T_x M$ to the tangent space $T_y M$ along the geodesics $\gamma$, $|\cdot|_x$ stands for the norm over $T_xM$  provided by the metric tensor. Clearly the convergence of vector fields with respect to $\|\cdot\|_{\Lip}$ is equivalent to convergence in Whitney $C^1$ topology once $M$ is compact (this can be easily seen e.g. once one uses the rather simple and general equivalent definition of Whitney topologies from~\cite{DeFariaHaz20_whitneyCr}, see also~\cite[theorem 5.7]{Artigue15}).

\begin{theorem}\label{th_control1constr_comp}
	Let $M$ be a $C^\infty$ smooth compact connected Riemannian manifold without boundary, 
	the vector field $V$ over $M$ be $C^1$, i.e. be a continuously differentiable section of the tangent bundle $TM$ of $M$,
	satisfying $\div V=0$, the divergence being intended with respect to the volume measure of $M$.
	Then for every couple of points $\{p,q\}\subset\R^d$ and every $\varepsilon>0$ there is a $C^1$ vector field
	$\tilde V$ on $M$ such that
	\begin{equation}\label{eq_tildeVest2b1}
	\|\tilde V- V\|_{\Lip} < \varepsilon
	\end{equation}
	and 
	the trajectory of the ODE
	\begin{equation}\label{eq_odeMcomp1}
	\dot{x} = \tilde V(x)
	\end{equation}
	satisfying $x(0)=p$ passes through $q$, i.e.\ $x(T)=q$ for some $T>0$.
\end{theorem}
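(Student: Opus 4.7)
The plan is to follow the skeletal argument of Theorem~\ref{th_control1constr}, with two crucial differences: Proposition~\ref{prop_Wgensmall1} is replaced by a one-line recurrence argument specific to the compact setting, and the time-dependent piecewise control $u(t)$ is promoted to an \emph{autonomous} and $C^1$-small state-feedback $W(x) := \tilde V(x) - V(x)$.

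For the recurrence step, Liouville's theorem guarantees that the flow of any divergence-free $V$ preserves the Riemannian volume measure $\mu$ on $M$, so Poincaré recurrence makes $\mu$-a.e. point Poisson-stable; in particular Poisson-stable points form a dense subset of $M$. No preliminary modification of $V$ is needed. Using the connectedness of $M$, for any prescribed $\rho>0$ I build a chain $p\to p'=x_1'\to x_2'\to\cdots\to x_n'=q'\to q$ of points with all consecutive distances less than $\rho$, the intermediate $x_j'$ being Poisson-stable and lying off the zero set $\{V=0\}$ (which is closed and nowhere dense unless $V\equiv 0$; the degenerate case $V\equiv 0$ is handled directly by taking $\tilde V$ any $C^1$-small field on $M$ whose orbit from $p$ reaches $q$). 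Poisson-stability then provides return times $T_j>0$ with $d_M(\phi_{T_j}^V(x_j'),x_j')<\eta$ for any desired $\eta\ll\rho$, so the endpoint $z_j:=\phi_{T_j}^V(x_j')$ of the $j$-th $V$-orbit segment sits within $\rho+\eta$ of $x_{j+1}'$.

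Since $V(z_j)\neq 0$, I introduce a flow-box chart around $z_j$ in which $V$ straightens to the constant field $e_1$ on $[0,L]\times B_{\delta_\perp}(0)\subset\R^d$, and take $W_j$ of the form $\beta_j\,\phi(x_\parallel/L)\,\psi(x_\perp/\delta_\perp)\,v_j$, with $v_j$ a unit vector pointing from $z_j$ toward $x_{j+1}'$ and $\phi,\psi$ smooth cutoffs. A direct integration over the traversal time $L/\|V\|_\infty$ shows that the induced transverse displacement is of order $\beta_j L$, whereas $\|W_j\|_{\Lip}\lesssim \beta_j/\min(L,\delta_\perp)$. The scaling $\beta_j\sim\varepsilon\delta_\perp/n$ and $L\,\delta_\perp\sim n(\rho+\eta)/\varepsilon$ then yields displacement $\gtrsim \rho+\eta$ from each $W_j$ while $\sum_j\|W_j\|_{\Lip}<\varepsilon$. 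Pulling back to $M$ via the inverse chart (absorbing a universal Jacobian constant into $\rho,\eta$), adding analogous end-point corrections realising $p\to p'$ and $q'\to q$, and summing produces $W:=\sum_j W_j$; by construction $\tilde V:=V+W$ satisfies $\|\tilde V-V\|_{\Lip}<\varepsilon$.

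The main obstacle is the final verification: that the \emph{autonomous} flow of $\tilde V$ starting at $p$ actually visits $x_1',\ldots,x_n'=q'$ in the prescribed order, i.e., that the trajectory enters each $\mathrm{supp}\,W_j$ exactly once within the relevant time window. In the time-dependent version of Theorem~\ref{th_control1constr} this is automatic, but in the state-feedback setting a premature re-entry would trigger a spurious kick and could derail the itinerary. This is controlled by arranging the flow-boxes pairwise disjoint (possible by shrinking $\delta_\perp$ below both the injectivity radius of $M$ and the minimum inter-waypoint separation) and by taking $\delta_\perp$ smaller than the transverse recurrence rate of $V$ along each segment $\{\phi_t^V(x_j'):t\in[0,T_j]\}$, so that within the finite horizon $\tau_n=\sum_j T_j$ the orbit does not re-intersect any of the thin tubes. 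Once this geometric separation is secured, the autonomous trajectory of $\tilde V$ passes successively through $x_1',\ldots,x_n'$ and finally reaches $q$, completing the proof.
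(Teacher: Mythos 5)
Your plan replaces the paper's two ingredients (Bessa's theorem that topological mixing is $C^1$-dense among divergence-free $C^1$ fields, plus a local surgery lemma near $p$ and $q$) by ``Poincar\'e recurrence + a chain of waypoints + autonomous flow-box kicks''. The recurrence part is fine, but the kick construction contains a genuine quantitative gap, and it is exactly the gap that makes $C^1$ connecting-type statements hard. Each kick must move the trajectory from the return point $z_j$ to the next waypoint $x_{j+1}'$, a displacement of fixed size about $\rho$, using a field $W_j$ with $\|W_j\|_{\Lip}$ small. Inside a flow box of transverse radius $\delta_\perp$ the transverse drift accumulated during one crossing cannot exceed roughly $\delta_\perp$: once the trajectory has drifted that far it leaves the support of the cutoff $\psi(x_\perp/\delta_\perp)$ and the kick switches off. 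Since you also require $\delta_\perp$ to be smaller than the inter-waypoint separation (hence $\lesssim\rho$), your claimed displacement $\beta_j L\gtrsim\rho+\eta$ is unattainable within your own constraints. If instead you enlarge $\delta_\perp$ to order $\rho$, then $C^1$-smallness forces the amplitude $\beta_j\lesssim\varepsilon\rho$, so the time spent in the box must be $\gtrsim 1/\varepsilon$ and the box length $L\gtrsim\|V\|_\infty/\varepsilon$. Such a long flow box around the orbit of a \emph{recurrent} point does not exist in general: the segment re-approaches itself within distance much smaller than $\rho$, so the tube self-intersects and $W_j$ is not a well-defined single-valued field (and the disjointness you need for the itinerary fails); moreover, even for an embedded segment the straightening chart's derivative grows like $e^{\Lip V\,t}$ in the time-length $t$, so your ``universal Jacobian constant'' is false and the $C^1$ bound on $W_j$ does not survive the pull-back to $M$. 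No choice of $\rho$ rescues the scaling, since shrinking $\rho$ increases $n$ and the required $L$ only grows. A secondary but real problem is the re-entry issue you mention: your fix (``$\delta_\perp$ smaller than the transverse recurrence rate'') is both vague and partly circular, because the later portions of the itinerary are trajectories of the already perturbed field, not of $V$.

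Note also that if your chain argument worked it would give an elementary proof of a $C^1$ connecting lemma for volume-preserving flows, bypassing Bessa's theorem entirely --- a strong sign that something is being underestimated. The paper's route avoids the obstruction: it first perturbs $V$ in the $\|\cdot\|_{\Lip}$ norm (by Bessa's theorem) to a topologically mixing field $\hat V$, so that a single genuine orbit of $\hat V$ passes within $\delta^3$ of $p$ and of $q$; then only two local surgeries are needed (Lemma~\ref{lm_corn1pt_manif2}, built on Lemma~\ref{lm_corrTraj1}), and there the required displacement ($\delta^3$) is much smaller than the allowed $C^1$ size of the perturbation ($\sim\delta$), which is precisely the regime in which localized autonomous modifications are compatible with $C^1$-smallness. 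Poincar\'e recurrence alone cannot substitute for the mixing/transitivity input in this $C^1$ setting.
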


\begin{proof}
	It suffices to prove the claim for $p\neq q$.
	We use first theorem~1.1 from~\cite{Bessa08} to get a $C^1$ vector field
	$\hat V$ on $M$ such that
	\begin{equation*}\label{eq_tildeVest2b2}
	\|\hat V- V\|_{\Lip} < \varepsilon/2
	\end{equation*}
	and the flow of $\hat V$ is topologically mixing, i.e. for every couple of open sets $U_p$ and $U_q$ in $M$ and for all sufficiently large $T>0$ there is a trajectory of
	\[
	\dot{y}=\hat V(y)
	\]
	starting at a point $y(0)\in U_p$ and arriving at $y(T)\in U_q$.
	We use then Lemma~\ref{lm_corn1pt_manif2} below with $\hat V$ instead of $V$ and $\varepsilon/2$ instead of $\varepsilon$ 
	to get a $\tilde V$ such that the trajectory of~\eqref{eq_odeMcomp1} satisfying $x(0)=p$ passes through $q$, and
	\begin{equation*}\label{eq_tildeVest2b3}
\|\hat V- \tilde V\|_{\Lip} < \varepsilon/2,
\end{equation*}
which together with~\eqref{eq_tildeVest2b2} gives~\eqref{eq_tildeVest2b1} as claimed.
\end{proof}

The following statements have been used in the above proof.

\begin{lemma}\label{lm_corn1pt_manif2}
	Let $V$ be a Lipschitz vector field on a smooth compact connected Riemannian manifold $M$, which is topologically mixing.
	Then for every $p\in M$ and $q\in M$, $p\neq q$, disjoint open neighborhoods $U_p$ of $p$ and $U_q$ of $q$ in $M$ respectively, and $\varepsilon>0$ there is a  Lipschitz vector field $\tilde V$ on $M$ coinciding with $V$ outside of $U_p\cup U_q$ such that
	\begin{equation}\label{eq_tildeVest2b4}
	\|\tilde V- V\|_{\Lip} < \varepsilon
	\end{equation}
	and the trajectory $y(\cdot)$ over $M$ of   
	\begin{equation*}\label{eq_ode4a}
	\dot y= \tilde V (y), 
	\end{equation*}
	starting at $y(0)=p$
 passes through $q$, i.e.\ $y(T)=q$ for some $T>0$.	
\end{lemma}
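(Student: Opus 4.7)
The plan is to use topological mixing to find a $V$-trajectory going from a point $y_0$ near $p$ to a point $y_1$ near $q$, and then to modify $V$ inside $U_p$ and inside $U_q$ by two small Lipschitz perturbations in flow-box coordinates so as to deflect the $\tilde V$-trajectory starting at $p$ onto this connecting trajectory and, in the end, to land it exactly at $q$.

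First I would reduce to the case $V(p)\neq 0$ and $V(q)\neq 0$. If $V$ vanishes at $p$, I add a preliminary Lipschitz perturbation supported in a small ball $B_r(p)\subset U_p$ and equal to a fixed vector multiplied by a smooth bump, whose $\|\cdot\|_{\Lip}$ can be made arbitrarily small by choosing the amplitude suitably small compared with $r$; such a modification, absorbed into $\tilde V - V$ at the end, makes $V(p)\neq 0$, and the same trick applies at $q$. Having done this, I choose small flow-box neighborhoods $B_p\subset U_p$ and $B_q\subset U_q$ of $p$ and $q$, with local coordinates $(t,s)\in(-L,L)\times D$ in which $V\approx\partial_t$ and $p=(0,0)$, and analogously on $B_q$. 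By topological mixing there exist, for every $\delta>0$, a point $y_0\in B_p$ with $|y_0-p|<\delta$, a time $T>0$, and $y_1\in B_q$ with $|y_1-q|<\delta$ such that $y_1$ is the endpoint at time $T$ of the $V$-trajectory starting at $y_0$.

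Next I construct $\eta_p$. Writing $y_0=(t_0,s_0)$ with $|t_0|,|s_0|<\delta$, I set
\[
\eta_p(t,s):=\chi_p(t)\,\psi_p(s)\,(0,s_0),
\]
where $\chi_p$ is a smooth bump on $[-L/2,L/2]$ with $\int \chi_p\,dt=1$ and $\psi_p$ is a smooth transverse cutoff equal to $1$ along the segment joining $s=0$ to $s=s_0$ and supported in a thin transverse tube of width $r_p$. A direct integration in these coordinates shows that the $(V+\eta_p)$-trajectory starting at $p=(0,0)$ leaves the support of $\eta_p$ at $(L/2,s_0)$, which lies on the $V$-trajectory of $y_0$, and hence coincides with that trajectory afterwards as long as it remains outside $\supp\eta_p$. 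An analogous $\eta_q$ inside $B_q$, constructed by the same recipe in reverse, pushes the endpoint from $y_1$ onto $q$. Elementary estimates give $\|\eta_p\|_\infty\leq C|s_0|/L$ and $\Lip \eta_p\leq C|s_0|\bigl(L^{-2}+(L r_p)^{-1}\bigr)$ with a geometric constant $C$, so fixing the flow-box and tube sizes first and then choosing $\delta$ small enough yields $\|\eta_p\|_{\Lip}+\|\eta_q\|_{\Lip}<\varepsilon$. Setting $\tilde V:=V+\eta_p+\eta_q$ gives the required vector field.

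The main obstacle is guaranteeing that the $V$-trajectory from $y_0$ to $y_1$ does not re-enter $\supp\eta_p$ or $\supp\eta_q$ at intermediate times, since such a re-entry would additionally perturb the constructed orbit and could spoil the landing at $q$. I would handle this by shrinking the transverse tubes: each subsequent visit of the $V$-trajectory to $B_p$ crosses the central transverse slice at a point distinct from the original $s_0$, and by taking $r_p$ smaller than the resulting separations the tube is kept disjoint from those visits; in degenerate situations where the separation is very small, the freedom in topological mixing lets me reselect $y_0$ within its $\delta$-neighborhood until the required avoidance holds, and the same for $y_1$. With this in place, the $\tilde V$-trajectory starting at $p$ traverses $\supp\eta_p$, follows the $V$-trajectory of $y_0$ unperturbed, then enters $\supp\eta_q$ and is deflected onto $q$, while~\eqref{eq_tildeVest2b4} follows from the individual estimates on $\eta_p$ and $\eta_q$.
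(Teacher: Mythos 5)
Your overall strategy (use mixing to get a $V$-orbit from a point $\delta$-close to $p$ to a point $\delta$-close to $q$, then make two $C^1$-small local corrections whose displacement is tiny compared with the size of their supports) is the same as the paper's, but your implementation has a genuine gap at its core: the flow-box device. Your "direct integration" showing that the $(V+\eta_p)$-trajectory leaves $\supp\eta_p$ exactly at $(L/2,s_0)$ is valid only if $V$ is \emph{exactly} $\partial_t$ in the chosen coordinates. For a vector field that is merely Lipschitz (as in the statement), the straightening chart built from the flow is only bi-Lipschitz, and even for a $C^1$ field it is only $C^1$; in either case the derivative of the chart is not Lipschitz, so when you transport $\eta_p$ back to $M$ you cannot conclude that $\tilde V-V$ is Lipschitz, let alone that $\|\tilde V-V\|_{\Lip}<\varepsilon$ — and this Lipschitz smallness is the whole point of the lemma. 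If instead you only take smooth coordinates in which $V\approx\partial_t$, the exact landing on the orbit of $y_0$ fails, and the resulting error is of the fixed size of the flow-box approximation, amplified along a travel time $T$ that you do not control (it blows up as $\delta\to0$), so the trajectory need not arrive near enough to $y_1$ for $\eta_q$ to steer it exactly to $q$. The paper avoids this entirely by working in an arbitrary fixed \emph{smooth} chart and defining the perturbed field as the push-forward of $V$ under a translation by $y_0-x(0)$ cut off at scale $\delta$ (with $|y_0-x(0)|\leq\delta^3$); since the perturbation is a conjugation by an explicit smooth map, the $\Lip$-estimate is immediate and the perturbed orbit is exactly the image of the old one.

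Two further points are unresolved. First, your preliminary reduction to $V(p)\neq 0$, $V(q)\neq 0$ changes the vector field, and topological mixing is a hypothesis on $V$, not on the modified field; since your connecting orbit must start at $y_0$ with $|y_0-p|<\delta$, i.e.\ deep inside the support of the preliminary bump (you need $\delta\ll r_p$ for the $\Lip$-estimate), the $V$-orbit furnished by mixing is simply not an orbit of the field you are actually perturbing, and no argument is given to restore it. The paper's conjugation trick requires no nonvanishing of $V$, so no such reduction is needed. Second, your treatment of possible re-entries of the connecting orbit into $\supp\eta_p\cup\supp\eta_q$ ("reselect $y_0$ until the required avoidance holds") is an appeal rather than a proof; the finitely-many-crossings observation helps, but the degenerate case needs an actual argument. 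As it stands, the proposal does not establish either the key estimate~\eqref{eq_tildeVest2b4} on $M$ or the exact arrival at $q$.
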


\begin{remark}
It is easy to observe from the proof of the above Lemma~\ref{lm_corn1pt_manif2} that it holds under somewhat
milder assumption of $V$ than topological mixing property: in fact, transitivity (existence of a dense trajectory) would suffice.
\end{remark}

\begin{proof}
	Take a small $\rho>0$ such that there is a smooth embedding $f$ of the ball $B_\rho(p)\subset U_p\subset M$ into $\R^d$ 
	and of the ball $B_\rho(q)\subset U_q\subset M$
	(a smooth diffeomorphism onto the image) with 
\begin{eqnarray}
\label{eq_flip1a}
\frac 1 2 d(x,y )\leq |f(x)-f(y)|\leq 2 d(x,y),\quad\text{and}\\
\label{eq_flip1b}
||| f^{-1}_*||| \leq 2, 
\end{eqnarray}	
where $d$ stands for the distance in $M$
and  $|||f^{-1}_* |||$ stands for the norm of pushforward operator $f^{-1}_*$ of vector fields
seen as a linear operator between Lipschitz vector fields
in $\R^d$ and Lipschitz vector fields over $\bar B_\rho(p)$.  We consider the pushforward $f_* V$ of the vector field $V$ by $f$ defined over $f(B_\rho(p))\cup f(B_\rho(q))$.
Given an $\varepsilon>0$,  by Lemma~\ref{lm_corrTraj1} and Remark~\ref{rm_tarj1a} there is a 
$\delta\in (0,1)$ which we may take to satisfy $\delta <\rho/6$,
 with the following property:
whenever $x_1$ and $x_2$ are points in $M$, for which
\begin{equation}\label{eq_delta3a}
|f(p)-f(x_0)|\leq \delta^3,\quad |f(p)-f(x_1)|\leq \delta^3,
\end{equation}
and $\hat{x}(\cdot)$ a trajectory of 
\[
\dot{\hat x}= (f_*V) (\hat x), 
\]
satisfying  $\hat x (0) = f(x_1)$, $\hat x(T) = f(x_2)$ for some $T>0$,
then there is a vector field $\hat V\colon \R^d\to\R^d$ coinciding with $f_*V$ outside of $B_{2\delta}(f(x_1))\cup B_{2\delta}(f(x_2))$, such that
\begin{equation}\label{eq_tildeVest2a}
\|f_*V-\hat V\|_{\Lip} < \varepsilon/2
\end{equation}
and for the
trajectories $\hat y_1(\cdot)$ and $\hat y_2(\cdot)$ of
\begin{equation}\label{eq_ode2a}
\dot{y}=\hat V(y)
\end{equation}
satisfying $\hat y_1(0)=f(p)$ and $\hat y_2(T)=f(q)$
one has that  
\begin{itemize}
	\item[(i)] $\hat y_1(\cdot)$ coincides with  $\hat{x}(\cdot)$ over $f(B_\rho(p))\setminus B_{2\delta}(f(x_1))$ 
	\item[(ii)] $\hat y_2(\cdot)$ coincides with  $\hat{x}(\cdot)$ over $f(B_\rho(q))\setminus B_{2\delta}(f(x_2))$.
\end{itemize}

To prove the existence of a $\hat{x}$ and $T$ as above 
consider a trajectory $x(\cdot)$ of~\eqref{eq_ode1} such that $x_1:=x(0) \in B_{\delta^3/ 2}(p)$ and
$x_2:=x(T)\in B_{\delta^3/2}(q)$ for some $T>0$ (such a trajectory exists since $V$ is assumed to be topologically mixing), and define $\hat{x}(\cdot):= f(x(\cdot))$. Note that then~\eqref{eq_delta3a} are satisfied in view of~\eqref{eq_flip1a}.

Let now $\tilde V := f^{-1}_* \hat V$ stand for the pushforward of $V$ by $f^{-1}$,
and denote $y_i(\cdot):= f^{-1}(\hat y_i(\cdot))$, $i=1,2$. Then
\begin{itemize}
	\item[(i')] $y_1(\cdot)$ coincides with  $x(\cdot)$ over $B_\rho(p)\setminus f^{-1}(B_{2\delta}(f(x_1)))$, 
	\item[(ii')] $y_2(\cdot)$ coincides with  $x(\cdot)$ over $B_\rho(q)\setminus f^{-1}( B_{2\delta}(f(x_2)))$,
	\item[(iii')] $\tilde V$ coincides with $V$ over $(B_\rho(p)\cup B_\rho(q))\setminus f^{-1}(B_{2\delta}(f(x_1))\cup B_{2\delta}(f(x_2)))$.
\end{itemize}
Define $\tilde V$ over $M\setminus (B_\rho(p)\cup B_\rho(q))$ by setting $\tilde V(x):=V(x)$ for
$x\not\in B_\rho(p)\cup B_\rho(q))$. 
Note that
\begin{align}
f^{-1}(B_{2\delta}(f(x_1))) &\subset B_{4\delta}(f(x_1))\quad\text{by~\eqref{eq_flip1a}}\\
& \subset B_{2\delta^3 + 4\delta}(p)\quad\text{by~\eqref{eq_delta3a} and~\eqref{eq_flip1a}}\\
& \subset B_{6\delta}(p) \quad\text{because $0<\delta<1$},
\end{align}
and analogously $f^{-1}(B_{2\delta}(f(x_2)))  B_{6\delta}(q)$, and therefore
\[
B_\rho(p)\setminus f^{-1}(B_{2\delta}(f(x_1))) \supset B_\rho(p)\setminus B_{6\delta}(q) \neq \emptyset,
\]
and analogously
$B_\rho(q)\setminus f^{-1}(B_{2\delta}(f(x_2))) \neq \emptyset$. Thus the vector field $\tilde V$ defined over $M$ is smooth.
Moreover,~(i') and ~(ii') above imply that  in fact
$y_1$ and $y_2$ are the same trajectory $y$ of~\eqref{eq_ode2a}, thus satisfying $y(0)=p$, $y(T)=q$.
Finally, from~\eqref{eq_tildeVest2a} and~\eqref{eq_flip1b} we get~\eqref{eq_tildeVest2b4} concluding the proof.
\end{proof}

The following results have been used in the above proof.

\begin{lemma}\label{lm_corrTraj1}
For every $\varepsilon>0$ there is a $\delta\in (0,1)$ with the following property:
if $\theta$ is a trajectory of~\eqref{eq_ode1}, i.e.\
\[
\theta:=\{x(t)\colon t\in \R\},
\] 
where 
$x(\cdot)$ 
solves~\eqref{eq_ode1}, then for every $y_0\in \R^d$ such that
\[
|y_0-x(0)|\leq \delta^3
\]
there is a vector field $\tilde V\colon \R^d\to\R^d$ coinciding with $V$ outside of $B_{2\delta}(x(0))$ with
\begin{equation}\label{eq_tildeVest1}
\|V-\tilde V\|_{\Lip} < \varepsilon
\end{equation}
such that the (forward)
trajectory (i.e.\ positive semitrajectory) $y(\cdot)$ of
\begin{equation}\label{eq_ode2}
\dot{y}=\tilde V(y)
\end{equation}
satisfying $y(0) = y_0$ coincides with $x(\cdot)$ over the
segment $[t_1, t_2]$, where
\[
t_1 := \min\{t \ge 0 : x(t) \notin B_{2\delta}(x(0))\}, \quad t_2 := \inf\{t \ge t_1 : x(t) \in B_{2\delta}(x(0))\}.
\]
\end{lemma}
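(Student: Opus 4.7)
\emph{Strategy.} The plan is to realise $\tilde V$ as the pushforward $\Phi_* V$ of $V$ by a smooth diffeomorphism $\Phi\colon\R^d\to\R^d$ which equals the identity outside $B_{2\delta}(x(0))$ and satisfies $\Phi(x(0))=y_0$. By naturality of the pushforward, the unique solution $y(\cdot)$ of $\dot y=\tilde V(y)$ with $y(0)=\Phi(x(0))=y_0$ is $y(t)=\Phi(x(t))$; this coincides with $x(t)$ whenever $x(t)$ lies outside $\supp(\Phi-\mathrm{id})$, and in particular throughout $[t_1,t_2]$ by the very definition of these times. Both the equality $\tilde V=V$ outside $B_{2\delta}(x(0))$ and the coincidence of trajectories on $[t_1,t_2]$ are thus automatic; only the estimate $\|V-\tilde V\|_{\Lip}<\varepsilon$ requires genuine work.

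\emph{Construction.} Fix once and for all a bump $\eta\in C_0^\infty(B_2(0))$ with $\eta(0)=1$ and set
\[
\phi(x) := (y_0 - x(0))\,\eta\!\left(\frac{x - x(0)}{\delta}\right),\qquad \Phi(x):=x+\phi(x).
\]
The hypothesis $|y_0-x(0)|\le\delta^3$ then yields
\[
\|\phi\|_\infty\le C\delta^3,\qquad \|D\phi\|_\infty\le C\delta^2,\qquad \|D^2\phi\|_\infty\le C\delta,
\]
with $C$ depending only on $\eta$. For $\delta$ small enough $\|D\phi\|_\infty<1$, so a standard contraction argument makes $\Phi$ a $C^\infty$-diffeomorphism of $\R^d$ onto itself equal to the identity outside $B_{2\delta}(x(0))$; moreover $\supp\phi$ is compactly contained there, so the trajectory claim from the previous paragraph holds verbatim.

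\emph{Lipschitz estimate.} This is the core of the proof, and the reason for requiring $|y_0-x(0)|\le\delta^3$ instead of merely $|y_0-x(0)|\le\delta$. Writing $z:=\Phi^{-1}(y)$, so that $|y-z|=|\phi(z)|\le C\delta^3$, one has
\[
(\tilde V - V)(y) = \bigl[V(z) - V(y)\bigr] + D\phi(z)\,V(z),
\]
whose derivative expands into terms involving products of $D\phi,\,D^2\phi,\,V,\,DV$ together with a commutator-type remainder $DV(y)-DV(z)\cdot D\Phi^{-1}(y)$. Combining the bounds on $\phi$ with the Lipschitz character of $V$ and the uniform continuity of $DV$ (available in the application, since $V$ there is the pushforward under $f$ of a $C^1$ field on the compact manifold $M$) one gets
\[
\|\tilde V - V\|_\infty \le C\bigl(\|V\|_\infty\delta^2+\Lip V\cdot\delta^3\bigr),
\]
\[
\|D(\tilde V-V)\|_\infty\le C\bigl(\omega(\delta^3)+\|DV\|_\infty\delta^2+\|V\|_\infty\delta\bigr),
\]
where $\omega$ is a modulus of continuity of $DV$. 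Both right-hand sides vanish as $\delta\to 0$, so choosing $\delta$ small enough gives $\|V-\tilde V\|_{\Lip}<\varepsilon$. The true obstacle is precisely this simultaneous control of the amplitude and of the first and second derivatives of $\phi$: a perturbation of size $|y_0-x(0)|$ supported on a ball of radius $2\delta$ a priori has first derivative of order $|y_0-x(0)|/\delta$ and second derivative of order $|y_0-x(0)|/\delta^2$, and only the cubic scaling $\delta^3$ forces all three of $\|\phi\|_\infty,\|D\phi\|_\infty,\|D^2\phi\|_\infty$ to tend to $0$ with $\delta$ fast enough to absorb the corresponding contributions to $\|D(\tilde V-V)\|_\infty$.
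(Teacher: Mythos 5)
Your proposal is correct and takes essentially the same route as the paper: both realize $\tilde V$ as the pushforward of $V$ under a compactly supported, bump-based near-identity diffeomorphism exchanging $x(0)$ and $y_0$ (the paper simply works with the inverse map, writing $\tilde V(y)=(D\Phi)^{-1}(y)\,V(\Phi(y))$ with $\Phi(y_0)=x(0)$), so that the perturbed trajectory is the image of $x(\cdot)$ and automatically agrees with it outside $B_{2\delta}(x(0))$. The key estimates are also the same: the cubic scaling $|y_0-x(0)|\leq\delta^3$ forces $\|\phi\|_\infty\lesssim\delta^3$, $\|D\phi\|_\infty\lesssim\delta^2$, $\|D^2\phi\|_\infty\lesssim\delta$, which together with $\Lip V$, $\|V\|_\infty$ and the modulus of continuity of $DV$ yields $\|V-\tilde V\|_{\Lip}<\varepsilon$ exactly as in Steps 4--5 of the paper's proof.
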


\begin{remark}\label{rm_tarj1a}
	The same proof applied to a backward trajectory of~\eqref{eq_ode1} with fixed $x(T)$ for some $T>0$ (instead of a forward trajectory with $x(0)$ fixed) shows that once
	\[
	|y_1-x(T)|\leq \delta^3
	\]
	there is a vector field $\tilde V\colon \R^d\to\R^d$ coinciding with $V$ outside of $B_{2\delta}(x(T))$ with
	\begin{equation*}\label{eq_tildeVest2b}
	\|V-\tilde V\|_{\Lip} < \varepsilon
	\end{equation*}
	such that the
	backward trajectory $y(\cdot)$ of
	\begin{equation*}
	\dot{y}=\tilde V(y)
	\end{equation*}
		satisfying $y(T)=y_1$ coincides with $x(\cdot)$ such that the coincides with $x(\cdot)$ over the segment $[t_1, t_2]$ defined above.
\end{remark}

\begin{remark}\label{rm_tarj1b}
	The same proof can be applied also to nonautonomous ODEs. For instance, in this way we prove that for every $\varepsilon>0$ there is a $\delta\in (0,1)$ such that if $x(\cdot)$ is a trajectory of
	\[
	\dot{x} = F(t,x),
	\]
	with a bounded vector field $F\colon \R\times \R^d\to \R^d$, 
	satisfying
	\[
	|F(t,x_1)|- F(t,x_2)|\leq L |x_1-x_2|, 
	\]
	then for every $y_0\in  \R^d$ 
	such that
	\[
	|y_0-x(0)|\leq \delta^3, 
	\]
	there is a vector field $\tilde F\colon \R^d\to\R^d$ coinciding with $F$ outside of 
$\R\times B_{2\delta}(x(0))$ 
	with
	\begin{equation*}\label{eq_tildeVestF1a}
	\|F-\tilde F\|_{\infty} < \varepsilon
	\end{equation*}
	such that the  trajectory $y(\cdot)$ of
	\begin{equation*}\label{eq_odeF2}
	\dot{y}=\tilde F(t,y)
	\end{equation*}
	satisfying $y(0)=y_0$ 
	coincides with $\theta$ outside of $B_{2\delta}(x(0))$. 
	Moreover, if 
	\[F(t,x)=V(x)+u(t),\] then $\tilde F(t,x)=\tilde V(x)+u(t)$, with
	\[\|V-\tilde V\|_{\infty} < \varepsilon.
	\] 
Of course, if one is interested only in the the smallness of $\|F-\tilde F\|_{\infty}$, only Steps~1-4 of the proof are needed.  
\end{remark}

\begin{proof}[Proof of Lemma~\ref{lm_corrTraj1}]
Consider a function $\varphi\in C_0^\infty(\R^d)$ with $\supp \varphi\in B_{2}(0)$ satisfying
\begin{align*}
0\leq \varphi(x)\leq 1 &\quad  \mbox{for all $x\in \R^d$},\\
\varphi(x)=1 &\quad  \mbox{for all $x\in \bar B_{1}(0)$},
\end{align*}
and set
\begin{align*}
\Phi(x) &:= \varphi_\delta(x) (x-(y_0-x(0))) + (1-\varphi_\delta(x)) x 
  ,\quad\mbox{where}\\
&\varphi_\delta(x) := \varphi\left(\frac{x-x(0)}{\delta}\right).
\end{align*}
By plugging $x=\Phi(y)$ into~\eqref{eq_ode1}, we get
\[
\dot{y}=(D\Phi)^{-1}(y(t)) V(\Phi(y(t))),
\]
which is~\eqref{eq_ode2} with
\begin{equation*}\label{eq_deftildV1}
\tilde V(y):=(D\Phi)^{-1}(y) V(\Phi(y)).
\end{equation*}

The rest of the proof will be divided in several steps.

{\sc Step 1} (preparatory observations).
We first note that
\[
\Phi(x) = x - \varphi_\delta (x) (y_0-x(0)).
\]
Therefore,
\begin{align*}
 D\Phi  & = \mbox{Id}- \nabla\varphi_\delta\otimes (y_0-x(0)),\\
 \partial_{y_j} D\Phi  & = - \nabla (\partial_{y_j}\varphi_\delta)\otimes (y_0-x(0)),
\end{align*}
so that, recalling
\begin{align*}
\|\nabla\varphi_\delta\|_\infty &\leq \frac{\|\nabla\varphi\|_\infty}{\delta},\\
\|\nabla (\partial_{y_j}\varphi_\delta)\|_\infty &\leq \frac{\|D^2\varphi\|_\infty}{\delta^2},
\end{align*}
we get
\begin{eqnarray}
\label{eq_estPhi1}
|y-\Phi(y)| &\leq \delta^3,\\
\label{eq_estDPhi1}
|D\Phi(y)|  & \geq 1-\|\nabla\varphi\|_\infty\delta^2,\quad\mbox{for all $y\in\R^d$},\\
\label{eq_estDPhi2}
\|D\Phi-\mbox{Id}|\|_\infty  & \leq \|\nabla\varphi\|_\infty\delta^2,\\
\label{eq_estD2Phi1}
\|\partial_{y_j} D\Phi\|_\infty  & = \|D^2\varphi\|_\infty\delta.
\end{eqnarray}
In particular, from~\eqref{eq_estDPhi1} we get that when
\begin{equation}\label{eq_deltaest1}
0\leq \delta < \frac{1}{\|\nabla\varphi\|_\infty^{1/2}}, 
\end{equation}
then $|D\Phi(y)|>0$ for all $y\in\R^d$.
Since $|\Phi(y)|\geq |y|-\delta^3$, we have that $\Phi\colon \R^d\to\R^d$ is a proper map (i.e. preimage of a compact set is precompact), and hence under the condition~\eqref{eq_deltaest1} it is globally invertible.  

{\sc Step 2}. Since $\Phi(x)=x$ for $x\in B_{2\delta}(x(0))^c$, we get that
$y(t)\in\theta$ when $y(t)\not \in B_{2\delta}(x(0))$. Further, under condition~\eqref{eq_deltaest1} since $\Phi$ is globally invertible,
then for every $z\in \theta\setminus B_{2\delta}(x(0))$ one has that $z=y(t)$ for some $t\in\R$, i.e.\ in other words
$y(\cdot)$ coincides with $\theta$ outside of $B_{2\delta}(x(0))^c$.

{\sc Step 3}. We claim that $y(0)=y_0$, if $\delta<1$. In fact, $x(0)=\Phi(y(0))$. This means that
$y(0)\in B_{\delta}(x(0))$, since 
\[
y(0)-x(0)=\varphi_\delta(y(0))(y_0-x(0)),
\]
and hence
\[
|y(0)-x(0)| \leq |y_0-x(0)|\leq \delta^3 <\delta. 
\]
But $\Phi(x)=x-(y_0-x(0))$ for $x\in \bar B_{\delta}(x(0))$, and hence
\[
x(0)=\Phi(y(0))=y(0)-(y_0-x(0)),
\]
implying the claim.

{\sc Step 4}. It remains to prove~\eqref{eq_tildeVest1} for a suitable choice of $\delta<1$. 
We prove first 
\begin{equation}\label{eq_tildeVestC0}
\|V-\tilde V\|_{\infty} < \varepsilon/2.
\end{equation}
To this aim we note
\begin{align*}
|V(y)-\tilde V(y)| &\leq  |V(y)-V(\Phi(y))| + |V(\Phi(y))-(D\Phi)^{-1}(y) V(\Phi(y))| \\
  &\leq \Lip V |y-\Phi(y)| + \|V\|_\infty |\mbox{Id}-(D\Phi)^{-1}(y)| \\
&\leq \Lip V |y-\Phi(y)| + \|V\|_\infty |(D\Phi)^{-1}(y)||(D\Phi)(y)-\mbox{Id}|.
\end{align*}
Plugging~\eqref{eq_estPhi1},~\eqref{eq_estDPhi1} and~\eqref{eq_estDPhi2} into the above estimate, we get
\begin{equation}\label{eq_tildeVestC0a}
|V(y)-\tilde V(y)| \leq \Lip V \delta^3  + \|V\|_\infty \frac{\|\nabla\varphi\|_\infty\delta^2}{1-\|\nabla\varphi\|_\infty\delta^2},
\end{equation}
so that to get~\eqref{eq_tildeVestC0} it is enough to take $\delta>0$ so that the right-hand side of~\eqref{eq_tildeVestC0a} be less than $\varepsilon/2$. 

{\sc Step 5}. To conclude the proof of~\eqref{eq_tildeVest1}, it remains to prove 
\begin{equation}\label{eq_tildeVestC1}
\|DV-D\tilde V\|_{\infty} < \varepsilon/2.
\end{equation}
Suppose $V\in \Lip(\R^d;\R^d)\cap C^1(\R^d;\R^d)$ with uniformly continous derivatives. 
We estimate
\begin{equation}\label{eq_tildeVest3}
\begin{aligned}
|DV(y)-D\tilde V(y)|\leq & |DV(y)-(D\tilde V)(\Phi(y))| \\
& \qquad + |D((D\Phi)^{-1})(y) \cdot V(\Phi(y))|.
\end{aligned}
\end{equation}
Clearly, when $\delta<1$, we have $x(0)\in B_1(y_0)$. 
Denoting by $\omega(\cdot)$ the (nondecreasing) modulus of continuity of $DV$ over $\bar B_1(y_0)$, we have 
\begin{equation}\label{eq_tildeVest3a}
|DV(y)-(D\tilde V)(\Phi(y))| \leq \omega(|y-\Phi(y)|)\leq \omega(\delta^3).
\end{equation}
Further, from
\[
\partial_{y_j}((D\Phi)^{-1})(y)= -(D\Phi)^{-1}(y) \partial_{y_j}(D\Phi)(y) (D\Phi)^{-1}(y)  
\]
and~\eqref{eq_estDPhi1},~\eqref{eq_estD2Phi1} we get
\begin{equation}\label{eq_tildeVest3b}
|D((D\Phi)^{-1})(y) \leq \frac{\|D^2\varphi\|_\infty\delta}{\left(1-\|\nabla\varphi\|_\infty\delta^2\right)^2}.
\end{equation}
Plugging~\eqref{eq_tildeVest3a} and~\eqref{eq_tildeVest3b} into~\eqref{eq_tildeVest3}, we obtain
\[
|DV(y)-D\tilde V(y)|\leq \omega(\delta^3) + \frac{\|D^2\varphi\|_\infty\delta}{\left(1-\|\nabla\varphi\|_\infty\delta^2\right)^2},
\]
which implies~\eqref{eq_tildeVestC1} and hence, toghether with~\eqref{eq_tildeVestC0}, also~\eqref{eq_tildeVest1}, thus concluding the proof.
\end{proof}

\end{document}